\pgfplotsset{compat=1.14}
\numberwithin{equation}{section}
\newcommand{\eqa}{\begin{eqnarray}}
\newcommand{\ena}{\end{eqnarray}}
\newcommand{\eq}{\begin{equation}}
\newcommand{\en}{\end{equation}}
\newcommand{\eqs}{\begin{eqnarray*}}
\newcommand{\ens}{\end{eqnarray*}}
\def\X{\mathbf{X}} 
\newcommand{\Z}     {\mathbb{Z}} 
\newcommand{\N}     {\mathbb{N}} 
\renewcommand{\P}   {\mathbb{P}} 
\newcommand{\E}     {\mathbb{E}}
\newcommand{\M}    {\mathbf{M}} 
 \newcommand{\floor}[1]{\left\lfloor #1 \right\rfloor}
\def\1{{\mathchoice {1\mskip-4mu\mathrm l}      % Blackboard bold 1 
{1\mskip-4mu\mathrm l} 
{1\mskip-4.5mu\mathrm l} {1\mskip-5mu\mathrm l}}} 
\newcommand{\ssup}[1] {{{\scriptscriptstyle{({#1}})}}} 
\def\comment#1{} 
\renewcommand{\d}{{\rm d}} 
\newcommand{\eps}{\varepsilon}
\newcommand{\Gcal}   {{\mathcal G }}
\def\ignore#1{}
\def\Def{:=} %MT181123
\newtheorem{theorem}{Theorem}
\newtheorem{proposition}[theorem]{Proposition}
\newtheorem{lemma}[theorem]{Lemma}
\newtheorem{remark}[theorem]{Remark}
\newtheorem{example}[theorem]{Example}
\newtheorem{definition}[theorem]{Definition}
\newtheorem*{ack}{Acknowledgement}
\newtheorem{theoAlph}{Theorem}
\renewcommand{\epsilon}{\varepsilon}
\title[Phase transitions for  ERRW on $\Z_+$]{Phase transitions for edge-reinforced random walks on the half-line}
\date{\today}
\author[J.~Akahori]{Jiro Akahori}
\address{Jiro Akahori\\ Department of Mathematical Sciences, Ritsumeikan University} \email{akahori@se.ritsumei.ac.jp}
\author[A.~Collevecchio]{Andrea Collevecchio}
\address{Andrea Collevecchio\\ School of Mathematical Sciences, Monash  University, Melbourne} \email{andrea.collevecchio@monash.edu}
\author[M.~Takei]{Masato Takei}
\address{Masato Takei\\ Department of Applied Mathematics, Faculty of Engineering, Yokohama National University} \email{takei-masato-fx@ynu.ac.jp}
\keywords{Self-interacting random walks, Reinforced random walks}
\newcommand{\MARU}[1]{{\ooalign{\hfil#1\/\hfil\crcr\raise.167ex\hbox{\mathhexbox20D}}}}
\date{}
\begin{document}

\begin{abstract} We study the behaviour of a class of edge-reinforced random walks {on $\mathbb{Z}_+$}, with heterogeneous initial weights, where each edge weight can be updated only when the edge is traversed from left to right. We provide a description for different behaviours of this process and describe phase transitions that arise as trade-offs between the strength of the reinforcement and that of the initial weights. Our result aims to complete the ones given by Davis~\cite{Davis89, Davis90},  Takeshima~\cite{Takeshima00, Takeshima01} and Vervoort~\cite{Vervoort00}.
\end{abstract}

\maketitle

\section{Introduction}
Reinforced random walks (RRW) have been extensively studied in  the past 30 years. 
The canonical model is the one introduced by Coppersmith and  Diaconis \cite{CD}, called Linearly Edge-Reinforced  Random Walk (LERRW) which can be described as follows. Consider a graph $\Gcal$ which is locally finite and to each edge assign initial weight one. These weights are updated depending on the behaviour of the process. LERRW takes values on the vertices of $\Gcal$, at each step it jumps to vertices which are neighbors of the present one, say $x$. The probability to pick a particular neighbor is proportional to the weight of the edge connecting that vertex to $x$. Each time the process traverses an edge, its weight is increased by one.    When $\Gcal$ is a tree, then LERRW is a random walk in an i.i.d. environment. {In general, it can be represented as a mixture of Markov chains (see Merkl and Rolles \cite{MR07b}).} The {mixing} measure is connected with  $H^{2/2} $ models, which in turn are used to explain the phenomena of Anderson localization. {For more information about this connection and details about  $H^{2/2} $ models, see for example \cite{ST15} and its bibliography.}

Our goal is to study a large class of edge-reinforced walks on $\Z_+$, inspired by the {work} of Davis~\cite{Davis89,Davis90}. We allow heterogeneous initial weights on the edges, and a  reinforcement that is different from linear. 
{A theorem of Vervoort~\cite{Vervoort00} establishes an interesting recurrence criterion for a large class of RRW with general  initial weights. The reinforcement scheme of these processes is characterised by the fact that there is a chance that an edge increases its weight  when traversed from left to right (see Theorem~\ref{thm:Vervoort00Z+}  below for a precise statement).} Hence, we focus our attention on the case where the reinforcement can happen only when the process traverses an edge from right to left. Intuitively, this class of {processes} is the \lq most\rq\ transient   and has an  interesting phase transition in terms of the initial weights and the reinforcement. 

We provide a general phase diagram for  edge-reinforced  random walks which take values on the vertices of $\Z_+$ and heterogenous initial conditions. This includes a description of  phase transitions that are trade-offs between the strength of the reinforcement and that of the initial weights. We use a martingale approach, a theorem of Austin (see  \cite{Austin66}) and the so-called Rubin construction (see \cite{Davis90}) combined with Cram\'er-type bounds. {Some of the methods} used in the proofs are close in spirit to the ones proposed by Davis {in \cite{Davis89, Davis90}}.

\subsection{Edge-reinforced random walks on the half-line}

%For each edge $ \{x,x+1\} \in E(\mathbb{Z}_+)$, an initial weight $w_0(x)>0$ is assigned.
%Let $w_0(-1)=0$; there is reflection at the origin.
We define the {\it edge-reinforced random walk} (ERRW), denoted by $\mathbf{X} =\{X_n\}_n $, as follows. This process takes values on the vertices of $\Z_+$ and at each step it jumps to one of the nearest neighbors. 
Denote by $\{x,x+1\}$ the  non-oriented edge connecting $x $ and  $x+1$. In contrast, we use $(x, y)$  to denote the oriented edge connecting $x$ to $y$. Define
\begin{equation}\label{def:phi}
 \phi_n(x) \Def \sum_{i=1}^n { \1_{ \{X_{i-1},\,X_i\} = \{x,\,x+1\} }}, 
 \end{equation}
that is the number of traversals of the edge $\{x,\,x+1\}$ by time $n$.
For each $x \in \mathbb{Z}_+$, let {$\mathbf{f}_x =(f(\ell,x) \colon \ell \in \Z_+)$} be a non-decreasing sequence of positive numbers, called {the} {\it reinforcement scheme} at $x \in \mathbb{Z}_+$. For each $n \geq 0$, the weights at time $n$ are defined by  
\[
w_n (x) = f(\phi_n(x),x)\quad \mbox{for $x \in \mathbb{Z}_+$}, 
\]
and
the transition probability is given by
\begin{align*}
\P(X_{n+1}=X_n+1\,|\,X_0,\ldots,X_n)&=1-\P(X_{n+1}=X_n-1\,|\,X_0,\ldots,X_n) \\
 &=\frac{w_n (X_n)}{w_n(X_n-1)+w_n(X_n)}.
\end{align*}
Here we set $w_n(-1) = 0$ for all $n \in \Z_{+}$, which implies   a reflection at the origin.

We say that the path $\mathbf{X}(\omega)$ is  {\it recurrent} if every point is visited infinitely often,  and
{\it transient} if every point is visited only finitely many times. Finally, if the set $R$   of points that $\mathbf{X}(\omega)$ visits infinitely often is finite, then we say that $\mathbf{X}(\omega)$ {\it localizes}. {Takeshima \cite{Takeshima00}} proved that {ERRW} $\X(\omega)$ {on $\Z_+$} can be either recurrent, transient or it localizes.  Notice there are cases where 
$$ 0< \P(\mathbf{X} \mbox{ is transient})<1.$$
{In fact, if we set $f(\ell, 0) = (\ell+1)^2$ for all $\ell \in \Z_+$, and $f(\ell, x) = x^2$ for all $\ell \in \Z_+$ and $x \in \N$, we have the following.  The process does not visit $2$, i.e. only visits the vertices $0$ and $1$, with probability
$$ \prod_{k=1}^\infty \frac{4k^2}{1 + 4k^2} >0.$$
Moreover, with positive probability the process drifts away to infinity. In fact, it  behaves like a transient Markov chain on the sites $x$ with $x \ge 2$.} 

{ For $\ell,  x \in \mathbb{Z}_+$  and  $k \in \N$,} define
\[
F_\ell ^{\ssup k}= \sum_{y=0}^{\infty} \left(\frac{1}{f(\ell,y)}\right)^k, \qquad { \Phi_x =  \sum_{j=0}^{\infty} \dfrac{1}{f(j,x)}}.
 \]  

It is well known 
that if  $f(\ell,x)=f(0,x)$ for all $\ell \geq 0$ and $x \in \mathbb{Z}_+$, i.e. if there is no reinforcement at all, then 
$\X$ is recurrent  a.s. if $F_0^{\ssup 1}=+\infty$, and is transient a.s. otherwise. 
In fact, $F_0^{\ssup 1}$ can be associated to the effective resistance of the network, which characterizes the behaviour of the relative Markov chain (see \cite{LP}). 
We say the ERRW is {\it initially recurrent} (resp. {\it initially transient})   if $F_0^{\ssup 1}=+\infty$  (resp. $F_0^{\ssup 1}<+\infty$).

Davis  \cite{Davis89} proved that initially recurrent reinforced random  walks are not necessarily recurrent. 
\setcounter{theoAlph}{3}
\begin{theoAlph}[Davis \cite{Davis89}]\label{Da} Consider the ERRW $\mathbf{X}$ on $\mathbb{Z}_+$.
\begin{itemize}
 \item[(i)] If $F_0^{\ssup 2}=+\infty$, then
 $\mathbf{X}$ is either recurrent or it localizes on a single edge.
 \item[(ii)] There exists a reinforcement scheme $(\mathbf{f}_x \colon x \in \Z_+)$ such that   $F_0^{\ssup 2}<+\infty$ and $F_0^{\ssup 1}=+\infty$ and $\X$ is transient with positive probability.
\end{itemize}
\end{theoAlph}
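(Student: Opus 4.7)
My plan is to treat the two halves of Theorem~\ref{Da} with complementary techniques: part (i) via a martingale-and-Austin argument together with Takeshima's trichotomy, and part (ii) via an explicit construction analyzed by the Rubin embedding.

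\textbf{Part (i).} Takeshima's trichotomy reduces the claim to showing that strict transience ($X_n\to\infty$ and every vertex visited finitely often) has probability zero when $F_0^{\ssup 2}=+\infty$, since localization on a set of diameter $\geq 2$ is excluded by the nearest-neighbor structure. Assume the contrary: writing $U_\infty(x)$ and $D_\infty(x)$ for the numbers of left-to-right and right-to-left crossings of $\{x,x+1\}$, both are a.s.\ finite with $U_\infty(x)=D_\infty(x)+1$. At the $k$-th visit to $x$ let $L_k^{(x)}$ be the indicator of a left jump and $q_k(x)$ its conditional probability, and consider the martingale $M_n^{(x)}=\sum_{k\le n}(L_k^{(x)}-q_k(x))$. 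Since $\sum_k L_k^{(x)}=D_\infty(x-1)<\infty$, applying Austin's theorem to the (suitably extended) adapted sequence, together with the orthogonality of $M^{(x)}$, yields $\sum_k q_k(x)(1-q_k(x))<\infty$ a.s. The monotonicity of $f(\cdot,x)$ supplies the pointwise bound
\[
q_k(x)\ \ge\ \frac{f(0,x-1)}{f(0,x-1)+f(\phi_\infty(x),x)},
\]
and since the number of visits $V_x$ is at least one for every $x$, a Cram\'er-type control of $\phi_\infty(x)$ on the transient event shows that, with controlled probability, this lower bound is comparable to $f(0,x-1)/[f(0,x-1)+f(0,x)]$ for $x$ large. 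Summing the squares of these bounds over $x$ produces a series comparable to $F_0^{\ssup 2}$, which the finiteness established above forces to converge, contradicting the hypothesis.

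\textbf{Part (ii).} I would exhibit an explicit scheme. Taking $f(0,x)=x+1$ makes $F_0^{\ssup 1}=+\infty$ and $F_0^{\ssup 2}<+\infty$. Combined with a strong reinforcement such as $f(\ell,x)=(x+1)(1+\alpha)^\ell$ for some $\alpha>0$, the Rubin construction realizes each successive crossing of $\{x,x+1\}$ by an independent exponential clock of rate $f(k-1,x)$. Transience corresponds to the event that at each vertex the total ``left-clock'' time dominates the ``right-clock'' time only finitely often; Cram\'er-type bounds on the partial sums $\sum_k\xi_k/f(k-1,x)$ together with the rapid growth of the rates yield positive probability of escape. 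The principal obstacle throughout is converting the pathwise random reinforcement level $\phi_\infty(x)$ into estimates involving only the initial weights $f(0,\cdot)$: in (i) this is exactly where $F_0^{\ssup 2}=+\infty$ (via the quadratic-variation term $q_k(1-q_k)$) enters decisively, while in (ii) the rapid reinforcement of the constructed scheme makes the Rubin clocks summable.
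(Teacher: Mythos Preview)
The paper does not itself prove Theorem~\ref{Da}; it is quoted from Davis~\cite{Davis89}. However, the paper's proof of Theorem~\ref{thm:ACT18-main}(1) and Proposition~\ref{moreg1} reproduce Davis's method, so your proposal can be compared against those.

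\medskip
\textbf{Part (i): a genuine gap.} Your per-vertex martingales $M_n^{(x)}$ do not deliver the contradiction. On the transient event each vertex $x$ is visited only finitely often, so $\sum_k q_k(x)(1-q_k(x))$ is trivially finite for every fixed $x$; Austin's theorem applied vertex-by-vertex adds no information. The sentence ``Summing the squares of these bounds over $x$ produces a series comparable to $F_0^{\ssup 2}$, which the finiteness established above forces to converge'' is exactly where the argument breaks: you never produced a \emph{single} finite random variable that dominates the sum over $x$. Davis (and this paper, in the proof of Theorem~\ref{thm:ACT18-main}(1)) instead uses the global resistance (super)martingale
\[
M_n=\sum_{x=0}^{X_{n\wedge\tau}-1}\frac{1}{w_n(x)},
\]
to which Austin's theorem gives $S^2(M)=\sum_n(M_{n+1}-M_n)^2<\infty$ a.s. On the transient event every edge $\{x,x+1\}$ is crossed at least once, and its first crossing contributes $1/f(0,x)^2$ to $S^2(M)$; hence $S^2(M)\ge F_0^{\ssup 2}=+\infty$, a contradiction. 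No Cram\'er control of $\phi_\infty(x)$ is needed.

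\medskip
\textbf{Part (ii): the explicit example fails.} With $f(\ell,x)=(x+1)(1+\alpha)^\ell$ you have $\sum_\ell \delta_\ell^{-1}<\infty$ and $f(0,x)/f(0,x-1)\le 2$, so by Proposition~\ref{moreg2} the walk \emph{localizes on a single edge a.s.}; it is not transient. Davis's construction (mirrored in Proposition~\ref{moreg1}) requires a reinforcement that is strong enough for $\sum_k \delta_{2k}^{-2}<\infty$ yet weak enough for $\sum_k \delta_{2k}^{-1}=+\infty$ (so that $\Phi_x=+\infty$ and Sellke's 0-1 law excludes localization). Then one bounds
\[
S^2(M)\le\Bigl(1+2\sum_{k\ge1}\delta_{2k}^{-2}\Bigr)\sum_{x\ge0}\frac{1}{f(0,x)^2}<\infty,
\]
so $\{M_n\}$ is $L^2$-bounded, $\E[M_\infty]=M_1>0$, and hence $\P(\tau=\infty)>0$. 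The Rubin embedding plays no role here.
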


The known phases {can be summarised} in the following table, where we combined   Theorem \ref{Da}  with other  results by {Davis \cite{Davis90}, Sellke \cite{Sellke94} and Takeshima \cite{Takeshima00,Takeshima01}.}
%{\small
\begin{center}
 {\bf TABLE I}
 \vspace{0.3cm}
 
\begin{tabular}{|c||c|c|c|} \hline
 &  \multicolumn{2}{|c|}{$F_0^{\ssup 1}= +\infty$} &  $F_0^{\ssup 1} < +\infty$ \\ \cline{2-3} 
 & $F_0^{\ssup 2}=  +\infty$ & $F_0^{\ssup 2}<  +\infty$ & \\ \hline \hline
 $\forall x \in \mathbb{Z}_+$, $\Phi_x = +\infty$ & recurrent a.s.  & ?? & transient  a.s. \\[1mm] \hline
 $\exists x \in \mathbb{Z}_+$, $\Phi_x < +\infty$ & \begin{minipage}{0.2\hsize}\begin{center} localizes on \\
 one edge a.s. \end{center}\end{minipage} & ?? & ??%*3
  \\[4mm] \hline
\end{tabular}\\

%\begin{itemize}
% \item[*1] $P(\mbox{$\mathbf{X}$ is recurrent})=1$ if $(f(\ell,x))_{\ell=0,1,\cdots;x \in \mathbb{Z}_+}$ is ``up-only" (Takeshima \cite{Takeshima00}). $P(\mbox{$\mathbf{X}$ is transient})=1$ for some ``down-only" $(f(\ell,x))_{\ell=0,1,\cdots;x \in \mathbb{Z}_+}$ (Davis \cite{Davis89}).
% \item[*2] $P(\mbox{$\mathbf{X}$ is transient})+P(\mbox{$\mathbf{X}$ is localized to a single edge})=1$. The latter event occurs with positive probability.  (Takeshima \cite{Takeshima00}).
%% \item[*3] $P(\mbox{$\mathbf{X}$ is transient})+P(\mbox{$\mathbf{X}$ is localized to a single edge})=1$.
%\end{itemize}
%}
\end{center}

\noindent

The question marks in Table I  indicate what is left open in general. In this paper we partially fill these gaps  for a general class of reinforcement, which we call Factor Type Reinforcement (FTR), and which is of the form
\begin{equation}
f(\ell,x) = \delta_{\ell} \cdot f(0,x), \qquad \mbox{for all $\ell\in \mathbb{Z}_+$},
\end{equation}
where {$\boldsymbol{\delta}=(\delta_\ell\colon \ell \in \Z_+)$} is  a positive non-decreasing sequence with $\delta_0 = 1$. Furthermore, Vervoort proved the following result  (Theorem 8.2.2 in \cite{Vervoort00}). For the sake of completeness we include the proof in the Appendix. \setcounter{theoAlph}{21}
\begin{theoAlph}[Vervoort \cite{Vervoort00}] \label{thm:Vervoort00Z+} Suppose that $F_0^{\ssup 1}=+\infty$, and suppose that $\X$ has FTR. Then, the  process $\X$ is recurrent a.s. if either i) {$\boldsymbol{\delta}$} is bounded or ii)  
 $ \Phi_x=+\infty$ for all $x \in \Z_+$, and $\delta_{2k}<\delta_{2k+1}$ for {some $k\in \Z_+$}.
\end{theoAlph}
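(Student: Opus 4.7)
The plan is to prove recurrence by showing that, starting from $X_0=1$, the probability $\P_1(T_N<T_0)$ of reaching $N$ before $0$ tends to zero as $N\to\infty$, where $T_k:=\inf\{n\ge 0\colon X_n=k\}$. This implies that $0$ is visited infinitely often a.s.\ and hence $\mathbf{X}$ is recurrent.

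For case~(i), where $\boldsymbol{\delta}$ is bounded above by some constant $C\ge 1$, the ratio of left- to right-jump probabilities at each vertex satisfies, uniformly in $n$ and in the history,
\[
\frac{1}{C}\cdot\frac{f(0,x-1)}{f(0,x)} \;\le\; \frac{w_n(x-1)}{w_n(x)} \;\le\; C\cdot\frac{f(0,x-1)}{f(0,x)}.
\]
Thus $\mathbf{X}$ is within a bounded multiplicative factor of the non-reinforced birth-death chain with weights $(f(0,x))_x$. A martingale argument in the spirit of Davis~\cite{Davis90} then controls $\P_1(T_N<T_0)$ by the corresponding escape probability of the reference chain, namely $\big(f(0,0)\sum_{y=0}^{N-1}1/f(0,y)\big)^{-1}$ up to a bounded factor. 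Since $F_0^{\ssup 1}=+\infty$ by hypothesis, this vanishes as $N\to\infty$.

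For case~(ii), where $\boldsymbol{\delta}$ may be unbounded but $\sum_j 1/\delta_j=+\infty$ (equivalent to $\Phi_x=+\infty$ under FTR) and $\delta_{2k}<\delta_{2k+1}$ for some $k$, the uniform bound above is lost. I would instead use a Rubin-type construction: attach to each edge $\{x,x+1\}$ an i.i.d.\ sequence $(E_i(x))_{i\ge 1}$ of $\mathrm{Exp}(1)$ variables and set
\[
\tau_j(x)\;=\;\frac{1}{f(0,x)}\sum_{i=1}^{j}\frac{E_i(x)}{\delta_{i-1}},
\]
so that $\mathbf{X}$ traverses next whichever adjacent edge has the smallest unused clock ring. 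The hypothesis $\sum_j 1/\delta_j=+\infty$ implies, via Austin's theorem on random series, that $\tau_j(x)\to+\infty$ a.s.\ for every $x$; no edge absorbs all the time. The structural alternation of right/left crossings on $\Z_+$ forces the parity of $\phi_n(x)$ to encode the walker's current position, so the strict inequality $\delta_{2k}<\delta_{2k+1}$ translates into an asymmetry between arrivals at a vertex from the left versus from the right. Cram\'er-type tail bounds on the partial sums $\sum_i E_i(x)/\delta_{i-1}$ then quantify this asymmetry as an effective leftward drift, ruling out escape to $+\infty$.

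The main obstacle will be case~(ii): the weak hypothesis ``$\delta_{2k}<\delta_{2k+1}$ at a single index $k$'' must be shown to produce a per-traversal advantage that survives the fluctuations of infinitely many exponential clocks along infinitely many edges. This is precisely where the Cram\'er-type estimates are needed, and matching them carefully with the alternation constraint that ties traversal parities to the walker's position is the delicate step.
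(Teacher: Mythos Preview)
Your case~(i) outline is workable but more roundabout than the paper's argument. The paper simply observes that the nonnegative supermartingale $M_n=\sum_{x=0}^{X_{n\wedge\tau}-1}1/w_n(x)$ converges a.s.\ by Doob, while on the escape event $E=\{\tau=+\infty,\ X_n\to+\infty\}$ one has $M_\infty\ge K^{-1}\sum_x 1/f(0,x)=+\infty$ when $\delta_\ell\le K$. This contradiction forces $\P(E)=0$, and Sellke's 0--1 law finishes. Your hitting-probability comparison would reach the same conclusion but with more bookkeeping.

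Case~(ii) has a genuine gap. You propose Rubin's construction plus Cram\'er-type tail bounds, but never say what quantity the bounds are applied to or why a \emph{single} strict inequality $\delta_{2k_0}<\delta_{2k_0+1}$ survives the fluctuations; indeed you flag this yourself as ``the main obstacle.'' A one-time asymmetry at traversal count $2k_0$ does not produce any large-deviation drift in the partial sums $\sum_i E_i(x)/\delta_{i-1}$, so the Cram\'er route appears to be a dead end here. The paper's mechanism is entirely different and purely algebraic: it introduces the compensated process
\[
\Theta_n \;=\; M_n + \sum_{m=1}^{n}\Bigl(\tfrac{1}{w_m(X_m)}-\tfrac{1}{w_{m+1}(X_m)}\Bigr)\1_{X_{m\wedge\tau}<X_{(m+1)\wedge\tau}},
\]
which is a nonnegative martingale and admits the closed form
\[
\Theta_n \;=\; \sum_{x=0}^{X_{n\wedge\tau}-1}\frac{1}{f(0,x)}\sum_{\ell=0}^{\phi_n(x)-1}\frac{(-1)^{\ell}}{\delta_{\ell}}.
\]
When the walker is to the right of $x$, the inner alternating sum runs up to an even index and telescopes to $\sum_{m}(\delta_{2m}^{-1}-\delta_{2m+1}^{-1})+\delta_{2k}^{-1}\ge C:=\delta_{2k_0}^{-1}-\delta_{2k_0+1}^{-1}>0$. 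Hence $\Theta_n\ge C\sum_{x<X_{n\wedge\tau}}1/f(0,x)$, which diverges on $E$ since $F_0^{\ssup 1}=+\infty$, contradicting Doob. The key idea you are missing is this compensated martingale and its alternating-sum representation; no probabilistic tail estimate is needed.
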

{By virtue of} Theorem~\ref{thm:Vervoort00Z+}, we can focus on the case where {$\boldsymbol{\delta}$} is unbounded and an edge can be reinforced only when the process traverses from right to left, i.e. {when $\delta_{2k} = \delta_{2k+1}$ for all $k  \in \Z_+$}.

\section{Main results}
\begin{definition} The sequence $\boldsymbol{\delta}$ is called down-only type (DT) if $\delta_{2k} = \delta_{2k+1}$, if it is non-decreasing,  and $\delta_0 = 1$.
\end{definition}
\begin{figure}[h]
	\centering
	\begin{tikzpicture}
		\begin{axis}[ymin=0, ymax=1.2, xmin=0, xmax=1.2, xlabel=$\alpha$, ylabel=$\rho$, area legend, legend pos=outer north east]
			\addplot[green!70, fill=green!70] coordinates {(0,0) (0,1) (0.5,1) (0.5,0.5) (1,0) (0,0)};
			\addlegendentry{Recurrent a.s.}
			\addplot[blue!70, fill=blue!70] coordinates {(1,0) (1,1) (1.2,1) (1.2,0) (1,0)};
			\addlegendentry{Transient a.s.}
			\addplot[red!70, fill=red!70] coordinates {(0,1) (0,1.2) (1.2,1.2) (1.2,1) (0,1)};
			\addlegendentry{Localizes a.s.}
			\addplot[red!70, domain=0.5:1, draw=none, forget plot, name path=A] {1};
			\addplot[blue!70, domain=0.5:1, draw=none, forget plot, name path=B] {(1.5 - x)/(2.5 - x)};
			\addplot[green!70, domain=0.5:1, draw=none, forget plot, name path=C] {1 - x};
			\addplot[blue!70, forget plot] fill between [of=A and B];
			\addplot[pattern=north east lines, pattern color=black] fill between [of=B and C];
	              \addlegendentry{Unknown}
		\end{axis}
	\end{tikzpicture}
	\vspace{-.6 cm}
	\begin{caption}
	{\small Different phases in Theorem~\ref{thm:ACT18-main}}
	\end{caption}
\end{figure}

%Note that this is a ``down-only" reinforcement scheme. 
Our main result is the following.

\begin{theorem} \label{thm:ACT18-main} Let $\X$ be a reinforced random walk with FTR, and suppose that ${\boldsymbol \delta}$ is DT.  Let $f(0,x)=(x+1)^{\alpha}$ and $\delta_{2k}=(k+1)^{\rho}$, for all $x, k \in \Z_+$,  with $ \alpha \in (1/2, 1]$ and $\rho \in [0,\infty)$.
\begin{itemize}
\item[1)] If $\rho  \in [0, 1-\alpha]$, then $\mathbf{X}$ is recurrent a.s..
\item[2)] If $\rho \in (1-\alpha,1/2]$ and $\rho > (1.5 - \alpha)/(2.5 - \alpha)$, then $\mathbf{X}$ is transient a.s..
\item[3)] If $\rho \in (1/2, 1]$, then $\mathbf{X}$ is transient a.s.. 
\item[4)] If $\rho \in  (1, \infty)$, then  $\mathbf{X}$ localizes on a single edge a.s..
\end{itemize}
\end{theorem}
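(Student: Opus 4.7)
The backbone of Parts 1--3 is a remarkable martingale that exists thanks to DT reinforcement. Define
\[
S_n\;:=\;\sum_{y=0}^{X_n-1}\frac{1}{w_n(y)}\qquad(S_n:=0\text{ when }X_n=0).
\]
A net-crossing identity shows that at $X_n=x>0$, $\phi_n(x)$ is even and $\phi_n(x-1)$ is odd; the DT relation $\delta_{2k}=\delta_{2k+1}$ then implies that an up-step keeps $w_n(x)$ unchanged while contributing $+1/w_n(x)$ to $S_n$, whereas a down-step genuinely reinforces $w_n(x-1)$ but simultaneously removes the term $1/w_n(x-1)$ from the sum. A direct computation gives $\mathbb{E}[S_{n+1}-S_n\mid \mathcal{F}_n]=0$ during any excursion from the origin, with a deterministic positive jump $1/w_n(0)$ at each return to $0$. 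This replaces the usual effective-resistance martingale of a frozen-environment birth--death chain.

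For Part 1, optional stopping on the $(k+1)$-th excursion, with $\sigma_k$ the time of the $k$-th return to $0$, yields
\[
(k+1)^{-\rho}\;=\;\mathbb{E}[S_{\tau_0\wedge\tau_N}\mid\mathcal{F}_{\sigma_k}]\;=\;\mathbb{P}(\tau_N<\tau_0\mid\mathcal{F}_{\sigma_k})\;\mathbb{E}[S_{\tau_N}\mid\tau_N<\tau_0,\mathcal{F}_{\sigma_k}].
\]
Combined with a lower bound $S_{\tau_N}\gtrsim(k+1)^{-\rho}N^{1-\alpha}$ valid when the within-excursion loops do not shrink $S_{\tau_N}$ too much, and with an Austin-type a.s.\ control on the cumulative down-crossing counts $\ell_y$, one deduces $\mathbb{P}(\tau_N<\tau_0\mid\mathcal{F}_{\sigma_k})\lesssim N^{\alpha-1}\to 0$ whenever $\rho\leq 1-\alpha$, so each excursion is a.s.\ finite.

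For Parts 2 and 3 the same identity is inverted: the within-excursion loops now genuinely shrink $S_{\tau_N}$, producing a lower bound on the escape probability. In Part 3 ($\rho>1/2$) a first-moment calculation already gives $\sum_k q_k=\infty$ a.s., where $q_k$ is the conditional probability that excursion $k$ never returns to $0$, and conditional Borel--Cantelli then forces transience. Part 2 is more delicate: one needs a Cram\'er-type large-deviation bound for the weighted profile $\sum_y((k+1)/(\xi_k(y)+1))^\rho(y+1)^{-\alpha}$, and the sharp threshold $\rho=(1.5-\alpha)/(2.5-\alpha)$ emerges by balancing the Cram\'er exponent against the resistance growth $N^{1-\alpha}$.

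For Part 4 I would use Rubin's exponential-clock construction: attach to each edge $e=\{x,x+1\}$ an i.i.d.\ family $(Z^e_j)_{j\geq 1}$ of unit exponentials and let the $k$-th traversal of $e$ occur at local clock time $\sum_{j=1}^k Z^e_j/f(j-1,x)$. Because $\rho>1$ makes $\sum_k 1/\delta_k<\infty$, the total clock time of any single edge is a.s.\ finite; a direct calculation then shows that the conditional probability that, after first reaching $x+1$, the walk never visits $x+2$ equals the convergent product $\prod_{k\geq 1}(k+1)^\rho/((k+1)^\rho+c_x)$, strictly positive iff $\rho>1$. Induction over $x$ combined with a zero-one law gives a.s.\ localization on some single edge. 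The main obstacle is Part 2: designing a Cram\'er bound for the weighted profile whose exponent exactly matches $(1.5-\alpha)/(2.5-\alpha)$ requires a two-scale analysis simultaneously tracking how within-excursion loops build up reinforcement differentially across edges and how the resulting profile feeds back into the escape probability, and it is where the bulk of the technical work sits.
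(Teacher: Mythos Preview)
Your martingale $S_n$ is exactly the process $M_n$ the paper uses, and your identification of why DT makes it a genuine martingale during excursions is correct. However, the arguments you build on it for Parts~1 and~2 diverge from the paper and, in places, have real gaps.

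\textbf{Part 1.} Your optional-stopping identity is fine, but the step ``$S_{\tau_N}\gtrsim(k+1)^{-\rho}N^{1-\alpha}$ when the within-excursion loops do not shrink $S_{\tau_N}$ too much'' is precisely the difficulty, and invoking ``Austin-type a.s.\ control on the $\ell_y$'' does not resolve it: Austin's theorem bounds $\sum_n(M_{n+1}-M_n)^2$, not individual crossing counts, and there is no obvious way to extract from it a uniform lower bound on $S_{\tau_N}$. The paper does not try to lower-bound $S_{\tau_N}$ at all. It argues by contradiction: assume transience, so $M_\infty<\infty$ by Doob, giving $\sum_x (N_x+1)^{-\rho}(x+1)^{-\alpha}<\infty$; Austin's theorem gives $\sum_x (N_x+1)^{1-2\rho}(x+1)^{-2\alpha}<\infty$; and then a single application of H\"older with exponents $p=\alpha/(2\alpha-1)$, $q=\alpha/(1-\alpha)$ combines these into $\sum_x (N_x+1)^{-\rho/p+(1-2\rho)/q}(x+1)^{-1}<\infty$, which is impossible when $\rho\le 1-\alpha$ because the exponent on $N_x+1$ is then nonnegative. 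This H\"older trick is the missing idea in your sketch.

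\textbf{Part 2.} The paper again argues by contradiction (assume recurrence) and shows $M$ is $L^2$-bounded, hence uniformly integrable, so $\E[M_\infty]=M_1>0$ forces transience with positive probability. The $L^2$ bound reduces to $\sum_x \E[N_x]^{1-2\rho}(x+1)^{-2\alpha}<\infty$, for which one needs $\E[N_x]\le \overline{C}x^{2/(1-\rho)}$. This is obtained by modeling the jumps at vertex~$x$ by an independent generalised P\'olya urn, proving via Rubin's exponential embedding and an exponential Markov (Cram\'er) inequality that $\E^x[B^*_n]\le \gamma_x^{1/(1-\rho)}n+Cn^{(1+\rho)/2}$, and then closing the recursion $\E[N_{x+1}]\le \gamma_x^{1/(1-\rho)}\E[N_x]+C\E[N_x]^{(1+\rho)/2}$ by induction. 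So the Cram\'er bound lives inside a single urn, not on the global ``weighted profile'' you describe; the threshold $(1.5-\alpha)/(2.5-\alpha)$ appears as the condition $2\alpha-2(1-2\rho)/(1-\rho)>1$ for summability. Your formulation via directly lower-bounding escape probabilities through a Cram\'er bound on $\sum_y((k+1)/(\xi_k(y)+1))^\rho(y+1)^{-\alpha}$ is not obviously workable: the profile $(\xi_k(y))_y$ is highly dependent across~$y$, and you have not indicated how to decouple it.

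\textbf{Parts 3 and 4.} Your ideas are in the right direction but heavier than needed. For Part~3 the paper observes that when $\rho>1/2$ and $\alpha>1/2$ one has $S^2(M)\le(1+2\sum_k\delta_{2k}^{-2})\sum_x f(0,x)^{-2}<\infty$ \emph{deterministically}, so $M$ is $L^2$-bounded outright; no Borel--Cantelli on excursions is required. For Part~4 the paper computes directly that $\P(E_x)\ge\exp(-C\sum_k\delta_{2k-1}^{-1})>0$ for the event that the walk never jumps from $x$ to $x+1$, applies Borel--Cantelli to get finite range, and invokes Rubin's theorem for single-edge localization; this is essentially your argument without the clock machinery.
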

{As highlighted in the next example, if we perturb a single reinforcement even slightly, we can witness a  transition  from recurrence to transience.}
\begin{example}
Let $\delta_{2k} = \delta_{2k+1} = (k+1)^{0.4}$, for all $k \in \Z_+$.~Let $f(0, x) = (x+1)^{0.9}$, for all $x \in   \Z_+$. For  $\eps \in (0, 1)$, define the family of functions
$$g_\eps (2k) = 
\begin{cases}
0 \qquad \mbox{if $k \neq \floor{1/\eps}$},\\
\eps \qquad \mbox{if $k = \floor{1/\eps}$},
\end{cases}
$$
and $g_\eps(2k+1) = 0$ for all $k \in \Z_+$. Define  the family of reinforced random walks  $\X^{\ssup \eps}$ with FTR  {$f_\eps(\ell, x) =  \delta^{\ssup \eps}_\ell f(0, x)$ where  $\delta^{\ssup \eps}_\ell =\delta_\ell + g_\eps(\ell)$}. Each of these processes, in virtue of Theorem~\ref{thm:Vervoort00Z+},  is recurrent. On the other hand, what is {somewhat} surprising is that  the process $\X$ with FTR $f(\ell, x) = \delta_\ell  f(0, x)$ is transient, in virtue of Theorem~\ref{thm:ACT18-main}, part 2).
\end{example}
\begin{remark} We emphasize the fact that outside the intervals $ \alpha \in (1/2, 1]$ and $\rho \in [0,\infty)$ the behaviour of the process is known from previous results (see Table I and Figure 1). Moreover, the proofs of Theorem~\ref{thm:ACT18-main} parts 3) and 4) cover more general cases, as stated in Propositions \ref{moreg1} and \ref{moreg2}.
{In principle parts 1) and 2) can also be adapted to more general reinforcements, e.g. with a slowly varying factor. To be more precise, our proofs rely on some integral estimations of series. In this context, reinforcements which are power functions are easy to deal with and give explicit estimates. On the other hand, the method itself covers more general cases. }
\end{remark}

\section{Proof of Theorem~\ref{thm:ACT18-main} }
Let $\tau := \inf\{ n>0 : X_n = 0 \}$ and 
\[ \displaystyle M_n = \sum_{x=0}^{X_{n \wedge \tau}-1} \dfrac{1}{w_n(x)} \quad \mbox{for $n \in \N$}, \mbox{ with $M_0= 0$}. \]
The process $\M = (M_n\colon   n \in \Z_+)$ is in general a non-negative supermartingale and will play a major role in our proofs. In fact, in virtue of our assumption that  ${\boldsymbol \delta}$ is DT, we have that $\M$ is  indeed  a martingale (see Lemma 3.0 in \cite{Davis90} for details).
% In addition we consider
%\[ H_n = M_n + \sum_{m=1}^{n} \left\{ \dfrac{1}{w_m (X_m )} - \dfrac{1}{w_{m+1} (X_m )} \right\} \cdot \1_{\{X_{m \wedge \tau} < X_{(m+1) \wedge \tau}\}}, \]
%which is a nonnegative martingale.  {ANDREA: where do we use  $H_n$ ?} (See Lemma 3.0 in \cite{Davis90} for details.) 

{
We use the following 0-1 law (see Sellke \cite{Sellke94} or Takeshima \cite{Takeshima01} for a proof).}
{
\setcounter{theoAlph}{18}
\begin{theoAlph}[Sellke's 0-1 law] \label{Sellke} Consider the ERRW $\X$ on $\Z_+$. If $\Phi_x  = +\infty $  for all $x \in \mathbb{Z}_+$,  then $\X$ is  either  recurrent  a.s. or transient a.s..
\end{theoAlph}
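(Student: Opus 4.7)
The plan is to prove the theorem in two stages: first, to rule out localization under the assumption $\Phi_x = +\infty$ for all $x$; second, to establish the $0$-$1$ dichotomy between recurrence and transience. The first stage is a direct application of the conditional Borel--Cantelli lemma (Austin's theorem), while the second stage requires a more delicate argument using return-time decomposition together with the Rubin construction.

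For the no-localization stage, I would argue by contradiction. By the nearest-neighbour structure of $\X$ and the reflection at the origin, any localization must occur on some interval $\{0, 1, \ldots, v\}$ with $v \ge 1$, meaning that the edge $\{v, v+1\}$ is traversed only finitely often while $v$ is visited infinitely often (the case $v = 0$ is excluded trivially because the walk always moves to $1$ from the origin). Let $N$ be the random last traversal time of $\{v, v+1\}$ and let $\sigma_1 < \sigma_2 < \cdots$ enumerate the subsequent visits to $v$. After $N$, the count $\phi_{\cdot}(v)$ is frozen, so $w_{\sigma_k}(v) = c_v$ for some finite constant $c_v$; on the other hand each visit to $v$ after $N$ is both immediately preceded and followed by a traversal of $\{v-1, v\}$, so $\phi_{\sigma_k}(v-1)$ increases by exactly $2$ between consecutive $\sigma_k$. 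The conditional probability $q_k$ of jumping from $v$ to $v+1$ at time $\sigma_k$ is thus bounded below by $c_v /(f(2k + C, v-1) + c_v)$ for some random finite $C$, and $\sum_k q_k = +\infty$ because $\Phi_{v-1} = +\infty$. By Austin's conditional Borel--Cantelli lemma, on the localization event the walk jumps from $v$ to $v+1$ infinitely often, contradicting the definition of $N$. Hence localization has probability zero.

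For the $0$-$1$ dichotomy, I would consider the successive return times to the origin, $\tau_1 < \tau_2 < \cdots$. Conditionally on $\{\tau_k < \infty\}$ and $\Fcal_{\tau_k}$, the process $\X$ after time $\tau_k$ is distributed as an ERRW from $0$ with shifted reinforcement $\widetilde f(j, x) = f(\phi_{\tau_k}(x) + j, x)$, whose associated $\widetilde \Phi_x$ is still $+\infty$. Letting $A = \{\text{recurrent}\}$ and $p = \P(A)$, L\'evy's $0$-$1$ law gives $\P(A \mid \Fcal_n) \to \1_A$ almost surely. Using a monotone coupling via the Rubin construction between the shifted walk starting at $\tau_k$ and the original walk, one would compare the conditional recurrence probabilities at successive return times and argue that $p \in (0, 1)$ forces $\P(A \mid \Fcal_{\tau_k})$ to fail to converge to $\1_A$ on a positive-probability subset of $A$, contradicting L\'evy. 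I expect the main obstacle here to be this coupling step: $\X$ is not Markov in the bare position, and neither the Hewitt--Savage nor the standard Markov-chain $0$-$1$ law applies directly, so the argument must rely on the Rubin construction together with the no-localization result of the first stage to reduce to the case of i.i.d. exponential clocks, where a tail $0$-$1$ law is available.
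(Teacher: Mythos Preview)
The paper does not prove Theorem~\ref{Sellke} at all: it is quoted as a black box, with the reader referred to Sellke~\cite{Sellke94} and Takeshima~\cite{Takeshima01} for a proof. There is therefore no ``paper's own proof'' to compare your proposal against.

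On the merits of your proposal itself: your first stage (ruling out localization) is essentially the standard argument and is correct in outline, with two small slips. First, what you call ``Austin's conditional Borel--Cantelli lemma'' is a misnomer; Austin's theorem in this paper is the statement $\sum_n (M_{n+1}-M_n)^2<\infty$ a.s.\ for $L^1$-bounded martingales, whereas what you actually need here is the second (conditional) Borel--Cantelli lemma, often attributed to L\'evy. Second, your assertion that the localization set must be of the form $\{0,1,\ldots,v\}$ is not justified (a priori it could be $\{u,\ldots,v\}$ with $u\ge 1$), but this is harmless: the right-endpoint argument you give works verbatim regardless of where the left endpoint sits, since it only uses $\Phi_{v-1}=+\infty$.

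Your second stage, by contrast, is not a proof but a sketch of a strategy, and the crucial step---the ``monotone coupling via the Rubin construction'' comparing the shifted walk after $\tau_k$ with the original walk---is neither stated precisely nor carried out. It is not clear what monotonicity you intend or why it would force $\P(A\mid\Fcal_{\tau_k})$ to fail to converge when $p\in(0,1)$. The route taken in the cited references is different: via the Rubin embedding one represents the walk in terms of an i.i.d.\ family of exponential variables and shows that the event $\{X_n\to\infty\}$ is measurable with respect to the tail $\sigma$-field of that family (changing finitely many clocks only perturbs finitely many edge weights by bounded amounts, which cannot turn transience into recurrence or vice versa once localization is excluded), and then Kolmogorov's $0$--$1$ law applies directly. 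Your return-time decomposition combined with L\'evy's $0$--$1$ law may be salvageable, but as written it has a genuine gap at the coupling step.
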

}

\subsection{Proof of Theorem~\ref{thm:ACT18-main} part 1)} {Since Theorem \ref{thm:Vervoort00Z+} part i) covers the case  $\alpha=1$ (that is $\rho = 0$), hereafter we assume that $\alpha \in (1/2,1)$.}
For $x \in \N$, let
\[ N_x := \sum_{n=1}^{\tau-1} \1_{(X_n,X_{n+1})=(x,x-1)}. \]
Define the event  $E:=\big\{ \mbox{$\tau=+\infty$,\, $\displaystyle \lim_{n \to \infty} X_n=+\infty$} \big\}$, {which implies transience.} We reason by contradiction  and suppose that $\P(E)>0$. On $E$, we have  that for any $y \in \Z_+$, there exists a $n_y \in \N$ such that $w_{n_y}(x) = w_{\infty}(x)$ for all $x \le y$. This implies that  for all $n \ge n_y$, we have
$$M_n \ge \sum_{x= 0}^{y} \frac 1{w_{\infty}(x)}, \qquad \mbox{ on $E$.}$$
By taking limits, we have that on $E$ 
\begin{equation}
{\label{M} M_{\infty} \ge \sum_{x=0}^{\infty} \dfrac{1}{w_{\infty}(x)} %= \sum_{x=0}^{\infty} \dfrac{1}{\delta_{N_x} \cdot f(0,x)}  
= \sum_{x=0}^{\infty} \dfrac{1}{(N_x+1)^{\rho} (x+1)^{\alpha}}. 
}
\end{equation}
On the other hand, $\M$ is a non-negative martingale. Combining  Doob\rq{}s convergence theorem (see \cite{Williams91})  with  \eqref{M}, we have that  
\[ \sum_{x=0}^{\infty} \dfrac{(N_x+1)^{-\rho}}{(x+1)^{\alpha}} <+\infty, \qquad \mbox{on the event $E$.}\]
At the same time, as $\M$ is a non-negative martingale, we can apply 
 Austin's theorem (see \cite{Austin66}), {which says}
\[S^2(M):={\sum_{n=0}^{\infty}} (M_{n+1}-M_n)^2<+\infty, \qquad \qquad \mbox{a.s.}. \]
{
Since
\begin{align*}
M_{n+1}-M_n=\begin{cases}
\dfrac{1}{w_n(x)} &\mbox{if $(X_n,X_{n+1})=(x,x+1)$}, \\
-\dfrac{1}{w_n(x)} &\mbox{if $(X_n,X_{n+1})=(x+1,x)$} \\
\end{cases}
\end{align*}
for $n < \tau$ and $x \in \Z_+$, we have
\begin{align*}
 S^2(M) &= \sum_{x=0}^{\infty} \sum_{n=0}^{\infty} \dfrac{1}{w_n(x)^2} \1_{ \{X_n,\,X_{n+1}\} = \{x,\,x+1\} } = \sum_{x=0}^{\infty} \sum_{\ell=0}^{\phi_{\infty}(x)} \dfrac{1}{f(\ell,x)^2} \\
 &\geq \sum_{x=0}^{\infty} \sum_{k=0}^{N_x} \dfrac{1}{({\delta_{2k}} \cdot f(0,x))^2} \\
 &= \sum_{x=0}^{\infty} \sum_{k=0}^{N_x} \dfrac{1}{(k+1)^{2\rho}  (x+1)^{2\alpha}}, \qquad \mbox{a.s. on $E$},
\end{align*}
}
which implies
\begin{align*}
\sum_{x=0}^{\infty} \dfrac{(N_x+1)^{1-2\rho}}{(x+1)^{2\alpha}}<+\infty, \qquad {\mbox{a.s. on $E$}.}
\end{align*}

Let
\[ p={\dfrac{\alpha}{2\alpha-1}} \in (1,+\infty),\quad q={\dfrac{\alpha}{1-\alpha}} \in (1,+\infty) \]
so that
\[ \dfrac{1}{p}+\dfrac{1}{q}=1,\quad \dfrac{1}{p}+\dfrac{2}{q} = {\dfrac{1}{\alpha}}. \]
By H\"older's inequality,
\begin{align*}
\sum_{x=0}^{\infty} {\dfrac{(N_x+1)^{-\rho/p + (1-2\rho)/q}}{x+1}} \leq \left(\sum_{x=0}^{\infty} \dfrac{(N_x+1)^{-\rho} }{(x+1)^{\alpha}}\right)^{1/p} \left(\sum_{x=0}^{\infty} \dfrac{(N_x+1)^{1-2\rho}}{(x+1)^{2\alpha}}\right)^{1/q} <+\infty.
\end{align*}
On the other hand, if {$\rho \leq 1-\alpha$}, then we have $-\rho/p + (1-2\rho)/q \geq 0$ and
\[ 
\sum_{x=0}^{\infty} {\dfrac{(N_x+1)^{-\rho/p + (1-2\rho)/q}}{x+1} }
\geq \sum_{x=0}^{\infty} {\dfrac{1}{x+1}} = +\infty. \]
This gives  a contradiction, and proves that $\P(E) =0$. The result follows by Sellke\rq{}s 0-1 law (see {Theorem~\ref{Sellke}}). 

\subsection{Proof of Theorem~\ref{thm:ACT18-main} part 2)} {In this Section} we prove that $\X$ is transient, a.s., {under the assumptions of part 2)}. We reason by contradiction. Suppose that $\X$ is recurrent a.s. (see {Theorem~\ref{Sellke}}). We prove that $\M$  is bounded in $L^2$, which implies that $\M$ is uniformly integrable and $\P(M_{\infty} \neq 0)  >0$, which in turn implies transience.
In fact, for $\rho \in (0, 1/2)$, there exists a constant $c>0$ such that
\begin{equation}\label{mart1}
\E[M_n^2]  = \sum_{j=1}^n \E\left[(M_j - M_{j-1})^2\right] \le c \sum_{x=0}^\infty \frac{\E[(N_x+1)^{1 - 2\rho}]}{(x+1)^{2 \alpha}} \le  c \sum_{x=0}^\infty \frac{{(\E[N_x+1])^{1 - 2\rho}}}{(x+1)^{2 \alpha}}
\end{equation}
In the last step, we {used} Jensen\rq{}s inequality, as the map $y \mapsto y^{1- 2 \rho}$ is concave, for $\rho \in (0, 1/2)$.
Similarly, for $\rho =1/2$, we have
\begin{equation}\label{mart1.1}
 \E[M_n^2] \le \sum_{x=0}^\infty \frac{\ln\E[N_x+1]}{(x+1)^{2 \alpha}}.
 \end{equation}
In virtue of \eqref{mart1}, in order to prove Theorem~\ref{thm:ACT18-main} part 2), it is enough to prove that   there exists $\overline{C}$  such that 
\begin{equation}\label{mart2}
\E[N_x] \le \overline{C} x^{\frac{2}{1- \rho} }
\end{equation}
for all $x \in \N$.
In order to see why \eqref{mart2} is sufficient for our purposes, simply notice  that 
$  2\alpha  - 2(1- 2\rho)/(1-\rho) > 1$ {is equivalent to the condition of the Theorem.}
{The remaining part of this Section is devoted to prove \eqref{mart2}. In particular, Lemma~\ref{martle3} below is the key result for our goal.}
\begin{definition}\label{genpo} Fix $x \in \N$ and set $\gamma_x \Def (x+1)^\alpha/x^\alpha$. Consider a generalised P\'olya  urn, which initially contains one white and one black ball. The reinforcement scheme for white balls is $f^{\ssup w}(k) = k^\rho$, for $k \in \N$.  The reinforcement scheme for black balls is $f^{\ssup b}(k) = \gamma_x  k^\rho$, for $k \in \N$.  In other words, if the composition of the urn at stage $n$ is $z$ white balls and $y$ black balls, then the probability to pick a white ball at the next stage is $f^{\ssup w}(z)/(f^{\ssup w}(z)+ f^{\ssup b}(y))$. At each stage a ball is picked, and returned to the urn together  with another ball of the same colour.
Denote by $\P^x$ the measure describing this model, and {by $\E^x$ the expected value with respect to $\P^x$}. Denote by $(W_n, B_n)$, with $n \in \Z_+$, the composition of the urn by time $n$,  with $W_0 = B_0 = 1$.  
Denote by $\mathbf{Po}^{\ssup x}$ the sequence $(W_n, B_n\colon n\in \Z_+)$ under the measure $\P^x$.
\end{definition}
\begin{lemma}\label{martle3} {Assume that $\alpha \in (1/2,1]$ and $\rho \in (1-\alpha,1/2]$.} Let  
$H_n \Def \inf\{k \in \N \colon W_k =n\}$ for $n \in \N$. Define {$B^*_n \Def B_{H_n}.$} There exists a constant $C$ such that for any $x,n\in \N$, we have
\begin{equation}\label{mart3}
\E^{x}[B^*_n] \le \gamma_{x}^{\frac 1{1- \rho}} n   +  C n^{\frac{1+\rho}2} .
\end{equation}
\end{lemma}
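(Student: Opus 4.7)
The plan is to prove Lemma~\ref{martle3} via the Rubin embedding alluded to in the introduction: the urn $\mathbf{Po}^{\ssup x}$ is realised by two independent families of exponential clocks, reducing the computation of $\E^x[B^*_n]$ to a race between two independent sums of exponential random variables, which is then controlled by a Chebyshev-type second moment estimate (the ``Cram\'er-type bound'' mentioned in the introduction).

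For the embedding, I would take two independent families of independent exponentials $(\xi^w_k)_{k\ge 1}$ and $(\xi^b_k)_{k\ge 1}$ with $\xi^w_k \sim \Exp(k^\rho)$ and $\xi^b_k \sim \Exp(\gamma_x k^\rho)$, and set $T^w_m := \sum_{i=1}^m \xi^w_i$, $T^b_m := \sum_{j=1}^m \xi^b_j$. Superimposing the two point processes $\{T^w_k\}_{k\ge 1}$ and $\{T^b_k\}_{k\ge 1}$ on $(0,\infty)$ reproduces $\mathbf{Po}^{\ssup x}$, with the $k$-th white (resp.\ black) arrival triggering the $k$-th white (resp.\ black) pick in the urn. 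Since the white count first hits $n$ at $T^w_{n-1}$, the variable $B^*_n$ has the same law as $1 + \#\{k \ge 1 : T^b_k \le T^w_{n-1}\}$, and therefore
\[ \E^x[B^*_n] = 1 + \sum_{k \ge 1} \P^x\bigl(T^b_k \le T^w_{n-1}\bigr). \]

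For the analysis of this sum, set $\mu(m) := \sum_{j=1}^m j^{-\rho}$ and $\mu_2(m) := \sum_{j=1}^m j^{-2\rho}$. Standard integral comparison yields $\mu(m) = m^{1-\rho}/(1-\rho) + \Op(1)$ and $\mu_2(m) = \Op(m^{1-2\rho}\vee \log m)$ (using $\rho \le 1/2$). In particular $\E^x[T^w_{n-1}] = \mu(n-1)$, $\E^x[T^b_k] = \mu(k)/\gamma_x$, $\mathrm{Var}(T^w_{n-1}) = \mu_2(n-1)$ and $\mathrm{Var}(T^b_k) = \mu_2(k)/\gamma_x^2$. Let $m^* := \gamma_x^{1/(1-\rho)} n$, the approximate solution of $\mu(m)/\gamma_x = \mu(n)$. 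For $k = \lceil m^* \rceil + j$ with $j \ge 1$, a Taylor expansion of $\mu$ near $m^*$ gives
\[ \E^x[T^b_k - T^w_{n-1}] \;\ge\; c\,\gamma_x^{-1/(1-\rho)}\, j\, n^{-\rho} \;-\; C_0, \]
for constants $c, C_0 > 0$ depending only on $\rho$ and $\alpha$, and uniform in $x$ since $\gamma_x \in [1, 2^\alpha]$. Fix $\Delta := \lceil n^{(1+\rho)/2}\rceil$. For every $k > \lceil m^*\rceil + \Delta$ and $n$ large enough the negative drift dominates $C_0$, and Chebyshev's inequality applied to the independent difference $T^w_{n-1} - T^b_k$ yields
\[ \P^x\bigl(T^b_k \le T^w_{n-1}\bigr) \le \frac{\mathrm{Var}(T^w_{n-1}) + \mathrm{Var}(T^b_k)}{\bigl(\E^x[T^b_k - T^w_{n-1}]\bigr)^2} \le \frac{C_1\, n^{1-2\rho}}{\bigl(c\gamma_x^{-1/(1-\rho)}\, j\, n^{-\rho}\bigr)^2} \le \frac{C_2\, n}{j^2}. \]
Splitting the sum at $k = \lceil m^*\rceil + \Delta$ and bounding each probability in the initial block by $1$,
\[ \E^x[B^*_n] \le \lceil m^*\rceil + \Delta + \sum_{j \ge \Delta} \frac{C_2\, n}{j^2} + \Op(1) \;\le\; \gamma_x^{1/(1-\rho)}\, n \;+\; C\, n^{(1+\rho)/2}, \]
since $n/\Delta \le n^{(1-\rho)/2} \le n^{(1+\rho)/2}$.

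The delicate part is producing the leading constant $\gamma_x^{1/(1-\rho)}$ \emph{exactly} rather than up to a multiplicative constant: the $\Op(1)$ remainder in the integral approximation for $\mu$ translates, after inversion, into an additive shift of $m^*$ of order at most $\Op(n^\rho)$, which is comfortably absorbed into the error term since $\rho < (1+\rho)/2$. A secondary subtlety is the endpoint $\rho = 1/2$, where $\mu_2(m) \asymp \log m$; the Chebyshev bound there carries an extra logarithmic factor, but the tail sum is still $\Op(n^{1/2} \log n) \subset \Op(n^{(1+\rho)/2})$. Uniformity of $C$ in $x$ is automatic from the bound $\gamma_x \in [1, 2^\alpha]$.
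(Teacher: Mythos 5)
Your overall architecture is the same as the paper's: realise $\mathbf{Po}^{\ssup x}$ by Rubin's exponential embedding, reduce $\E^x[B^*_n]$ to $\sum_k \P^x(T^b_k \le T^w_{n-1})$, split the sum at $m^*+\Delta$ with $m^*\approx \gamma_x^{1/(1-\rho)}n$ and $\Delta \approx n^{(1+\rho)/2}$, bound the head trivially by its length, and control the tail by a concentration estimate. The difference, and the gap, is in the concentration step. You replace the paper's exponential (Chernoff) bound by a second-moment Chebyshev bound, and in doing so you assert $\mathrm{Var}(T^w_{n-1})+\mathrm{Var}(T^b_k)\le C_1 n^{1-2\rho}$ uniformly in $k$. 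That is false once $k$ is much larger than $n$: by your own estimate $\mu_2(m)=\Op(m^{1-2\rho}\vee\log m)$, one has $\mathrm{Var}(T^b_k)\asymp k^{1-2\rho}$, which grows without bound in $k$ (for $\rho<1/2$). Likewise your drift lower bound $c\,\gamma_x^{-1/(1-\rho)}\,j\,n^{-\rho}$ is a local (Taylor) estimate valid only for $j=\Op(n)$; for $j\gg n$ the true drift is only of order $j^{1-\rho}$, which is \emph{smaller} than $j\,n^{-\rho}$ there. Inserting the correct far-tail estimates into Chebyshev gives
\[
\P^x\bigl(T^b_k\le T^w_{n-1}\bigr)\ \lesssim\ \frac{k^{1-2\rho}}{\bigl(c\,k^{1-\rho}\bigr)^2}\ \asymp\ \frac1k ,
\]
which is not summable. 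So the second-moment method, as written, cannot even establish finiteness of the tail sum, let alone the $\Op(n^{(1+\rho)/2})$ bound; the error is not confined to the endpoint $\rho=1/2$.

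The fix is exactly the ``Cram\'er-type bound'' you name in your preamble but never deploy: the paper bounds $\P^x(\widetilde B_s\le \widetilde W_n)\le \E^x[e^{\theta_n \widetilde W_n}]\,\E^x[e^{-\theta_n \widetilde B_s}]$ with the tilt $\theta_n=\tfrac12 n^{-(1-\rho)/2}$, obtaining a factor $\exp\bigl(-C_1 n^{-(1-\rho)/2}s^{1-\rho}\bigr)$ whose sum over $s> a_{n,x}$ is $\Op(n^{(1+\rho)/2})$ after an integral comparison. Your argument would also go through if you keep Chebyshev only on the moderate range $\Delta\le j\le n$ (where your variance and drift estimates are correct and yield $\sum_j C n/j^2\le Cn^{(1-\rho)/2}$) and switch to an exponential-moment bound, or at minimum a fourth-moment bound (which yields $\Op(j^{-2})$ with the correct far-tail drift and moments), on the range $j>n$. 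The identification of the leading constant $\gamma_x^{1/(1-\rho)}$ and the absorption of the $\Op(n^{\rho})$ inversion error into $\Op(n^{(1+\rho)/2})$ are handled correctly and match the paper.
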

\begin{proof} Consider Rubin\rq{}s embedding (see the Appendix of \cite{Davis90}), which is shortly described as follows. Let $(Y_i)_i$ and $(Z_i)_i$ be two independent sequences of independent exponentials with parameter one. Set, for each $n \in \N$,  
\begin{equation}\label{RubC}
 \widetilde{W}_n \Def \sum_{k=1}^n \frac{Y_k}{k^\rho}, \qquad \mbox{and} \qquad \widetilde{B}_n \Def  \frac 1{\gamma_x} \sum_{k=1}^n \frac{Z_k}{ k^\rho}.
\end{equation}
The variables $(\widetilde{W}_n, \widetilde{B}_n\colon n \in \N)$ can be used to generate a P\'olya urn process with the features of Definition~\ref{genpo}.  In this context,  $\{\widetilde{B}_s <  \widetilde{W}_n\}$ for  $s, n \in \N$ if and only if by the time the urn contains $n+1$ white balls, it  contains at least $s+1$ black ones. 
Let 
$$a_{n, x} \Def   {\left\lceil n \cdot \gamma_x^{\frac {1}{1 -\rho}}\right\rceil,} $$
{where   $\lceil x \rceil$ denote the smallest integer larger or equal to $x$.}
Fix a sequence  $\theta_n \in (0,1/2)${, which will be specified later}. For $s\ge a_{n, x}$ we have 
$$
\begin{aligned}
 \P^x(\widetilde{B}_{s} \le \widetilde{W}_n)&=  \P^x({\rm e}^{ \theta_n \widetilde{B}_{s}}  \le {\rm e}^{\theta_n \widetilde{W}_n})\\
&\le  \E^x\left[{\rm e}^{ \theta_n \widetilde{W}_n}\right]\E^x\left[{\rm e}^{ - \theta_n \widetilde{B}_{s}}\right] \qquad (\mbox{Markov ineq.})\\
&= \prod_{k=1}^n \frac 1{1 - \theta_n/k^{\rho}}\prod_{j=1}^{a_{n, x}} \frac 1{1+ \theta_n/(\gamma_x j^{\rho})}\prod_{t=a_{n, x}+1}^s \frac 1{1+ \theta_n/(\gamma_x t^{\rho})}.
\end{aligned}
$$
Call the product of the first two terms $I_{\theta_n, n,x}$ and the third term $II_{\theta_n, n,x, s}$. We have 
$$
\begin{aligned}
I_{\theta_n, n,x} &\le \exp\left\{ \theta_n \sum_{j=1}^n \frac{j^{- \rho}}{1 - \theta_n/j^{\rho}} -  \theta_n \sum_{j=1}^{a_{n,x}} \frac{\frac 1{\gamma_x j^{ \rho}}}{1 + \theta_n/(\gamma_x j^{\rho})}\right\} \\
&\le \exp\left\{ \frac{\theta_n}{1- \theta_n} \sum_{j=1}^n j^{- \rho} -  \frac{\theta_n}{1+\frac{\theta_n}{\gamma_x}} \frac 1{\gamma_x}\sum_{j=1}^{a_{n,x}} j^{- \rho}\right\}\\
&\le \exp\left\{ \frac{\theta_n}{(1- \rho)(1- \theta_n)} n^{1- \rho} -  \frac{\theta_n}{(1- \rho)(1+\frac{\theta_n}{\gamma_x})}  \frac 1{\gamma_x}(a_{n, x}^{1-\rho} -1)\right\}\\
&\le \exp\left\{ \frac{\theta_n}{(1- \rho)(1- \theta_n)} n^{1- \rho} -  \frac{\theta_n}{(1- \rho)(1+\frac{\theta_n}{\gamma_x})}   n^{1-\rho}+ \frac{\theta_n}{(1- \rho)(\gamma_x+\theta_n)}  \right\}\\
&\le \exp\left\{ \frac{{2n^{1-\rho} \theta_n^2}}{(1- \rho)(1- \theta_n)(1+\frac{\theta_n}{\gamma_x})}  + \frac{\theta_n}{(1- \rho)(1+\theta_n)}  \right\}.
 \end{aligned}
 $$
{The first inequality follows from an elementary bound
\[ 
 \exp\left(\dfrac{x}{1+x}\right) \leq 1+x \qquad \mbox{for $x>-1$.}
%\dfrac{1}{1-x} \leq \exp\left(\dfrac{-x}{1-x}\right)\qquad \mbox{for $x \in (0,1)$.}
\]
The third inequality uses an integral comparison.
We can obtain the fourth and fifth inequalities by noting that $a_{n,x} \geq n \gamma_x^{\frac{1}{1-\rho}}$ and $\gamma_x \geq 1$, respectively.
} On the other hand,
 $$
 \begin{aligned}
 II_{\theta_n, n,x,s}&\le \exp\left\{- \frac{\theta_n}{{\gamma_x}+ \theta_n} \sum_{j = a_{n,x}+1}^{s} j^{-\rho}\right\}\\
 &\le \exp\left[- {2C_1}  \theta_n \Big\{s^{1 -\rho} -   (a_{n, x}+1)^{1-\rho}\Big\}\right].
  \end{aligned}
$$ 
By choosing $\theta_n = {(1/2)}n^{-(1-\rho)/2}$, {we can see that $I_{\theta_n, n,x}$ is bounded by a positive constant $C_2$. Thus,} we have
$$
\begin{aligned}
&\sum_{s = a_{n, x} +1}^\infty \P^x(\widetilde{B}_{s} \le \widetilde{W}_n) \\
&\leq C_2\exp\Big\{ C_1 n^{-\frac{1-\rho}2}(a_{n, x}+1)^{1-\rho}\Big\}\sum_{s = a_{n, x} +1}^{\infty}\exp\Big(- C_1 n^{-\frac{1-\rho}2} s^{1 -\rho} \Big) \\
&\leq C_2\exp\Big\{ C_1 n^{-\frac{1-\rho}2}(a_{n, x}+1)^{1-\rho}\Big\} \int_{a_{n, x}}^{\infty}\exp\Big(- C_1 n^{-\frac{1-\rho}2} s^{1 -\rho} \Big) \d s.
\end{aligned}
$$
{Suppose that $n$ is large enough to imply $C_1 n^{-\frac{1-\rho}{2}} a_{n, x}^{1 -\rho} \geq 1$.}
Letting $t= C_1 n^{-\frac{1-\rho}2} s^{1 -\rho} $ and noting that $\rho \in (0,1/2]$,
$$
\begin{aligned}
&\int_{a_{n, x}}^{\infty}\exp\Big(- C_1 n^{-\frac{1-\rho}2} s^{1 -\rho} \Big) \d s \\
&= (1-\rho)^{-1} \big( C_1 n^{-\frac{1-\rho}2} \big)^{-\frac{1}{1-\rho}} \int_{C_1 n^{-(1-\rho)/2} a_{n, x}^{1 -\rho}}^{\infty}t^{\frac{\rho}{1-\rho}}e^{-t} \d t \\
&\leq  C_3 n^{\frac{1}{2}} \Big(C_1 n^{-\frac{1-\rho}{2}} a_{n, x}^{1 -\rho}\Big)^{\frac{2\rho-1}{1-\rho}} \int_{C_1 n^{-(1-\rho)/2} a_{n, x}^{1 -\rho}}^{\infty}te^{-t} \d t \\
&= C_3 n^{\frac{1}{2}} \Big(C_1 n^{-\frac{1-\rho}{2}} a_{n, x}^{1 -\rho}\Big)^{\frac{2\rho-1}{1-\rho}}  \Big(C_1 n^{-\frac{1-\rho}{2}} a_{n, x}^{1 -\rho}+1\Big) \exp \Big(-C_1 n^{-\frac{1-\rho}{2}} a_{n, x}^{1 -\rho}\Big) \\
&\leq C_4 n^{\frac{1+\rho}{2}}\exp \Big(-C_1 n^{-\frac{1-\rho}{2}} a_{n, x}^{1 -\rho}\Big).
\end{aligned}
$$
Hence we have
$$
\begin{aligned}
&\sum_{s = a_{n, x} +1}^\infty \P^x(\widetilde{B}_{s} \le \widetilde{W}_n) \\
&\leq {C_2}C_4 n^{\frac{1+\rho}{2}} \exp \left[ C_1 n^{-\frac{1-\rho}{2}} \big\{(a_{n, x}+1)^{1 -\rho}- a_{n, x}^{1 -\rho}\big\}\right] \leq {C_5} n^{\frac{1+\rho}{2}}.
\end{aligned}
$$
In  the last inequality, we used the fact that for $m \in \N$,  we have
\[ (m+1)^{1-\rho}  - m^{1-\rho} \leq (1-\rho) m^{-\rho}. \]
The previous inequality can be proved via the mean value theorem applied to the function $h(t) = (m+t)^{1-\rho}$, defined for $t >0$.
Finally we have 
$$
\begin{aligned}
\E^x[B^*_{n}] &= \sum_{k = 1}^{a_{n,x}}  \P^x(\widetilde{B}_{k} \le \widetilde{W}_n) +  \sum_{s = a_{n, x} +1}^\infty \P^x(\widetilde{B}_{s} \le \widetilde{W}_n)
\le {\gamma_x^{\frac 1{1-\rho}} n + 1 + C_5 n^{\frac{1+\rho}{2}}}
\end{aligned}
$$
{for all large $n$. By choosing a large $C>0$, we obtain 
$$\E^x[B^*_{n}] \leq \gamma_x^{\frac 1{1-\rho}} n +  C n^{\frac{1+\rho}{2}} $$
for all $n$.}
\hfill
\end{proof}

We can use a collection of independent generalized P\'olya urns ($\mathbf{Po}^{\ssup x} \colon x \in {\N}$), where  $\mathbf{Po}^{\ssup x}$ has distribution $\P^x$, to generate a reinforced random walk $\X$. In this context,  the jumps from vertex $x$ are  modelled using the  urn $\mathbf{Po}^{\ssup x}$. Each time the process is at $x$, we pick a ball from the urn and observe its color. If it is black the walk moves to $x+1$, and moves to $x-1$ otherwise.
Recall that $N_x$ is the total number of jumps from $x$ to $x-1$ before time $\tau$.
As we assume that $\X$ is recurrent  a.s., the variable $N_x$ is $\sigma({\bf Po}^{\ssup k}\colon k \in \{1, 2, \ldots, x-1\})$-measurable. Therefore $N_x$ is independent of ${\bf Po}^{\ssup x}$. Using Lemma~\ref{martle3} with the urn  $\mathbf{Po}^{\ssup x}$, with $n = N_x$, we have
\begin{equation}\label{eq:condvers}
{\E}[N_{x+1}\;|\; N_{x}] \le \gamma_x^{\frac 1{1-\rho}} N_x + C N_x^{\frac{1+\rho}2}.
\end{equation}
As $\rho \in (0, 1/2]$, we have that $(1+\rho)/2<1$.
By taking the expected value of both sides in \eqref{eq:condvers} and using Jensen\rq{}s inequality, we have that 
\begin{equation}\label{eq:condvers1}
{\E}[N_{x+1}] \le \gamma_x^{\frac 1{1-\rho}} {\E} [N_x] + C {\E} [N_x]^{\frac{1+\rho}2}.
\end{equation}
\begin{proof}[Proof of Theorem~\ref{thm:ACT18-main} part 2)]
Consider a sequence $(a_x \colon x \in \N)$ satisfying
\begin{equation}\label{eq:recur}
a_{x+1} \le  \gamma_x^{\frac 1{1-\rho}} a_x + C a_x^{\frac{1+\rho}2}, \qquad \mbox{with $a_1<\infty$.}
\end{equation}
{Notice that the sequence $a_x := \E[N_x]$ satisfies \eqref{eq:recur}, and $a_1 =1$ is finite in virtue of the definition of $N_1$.  Hence \eqref{mart2} is  proved once 
we prove that}
\begin{equation} \label{eq:seqprop}
a_x \le \overline{C} x^{\frac{2}{1-\rho}},
\end{equation}
for some positive constant $\overline{C}$,  and all $x \in \N$. We prove this by induction.  Of course it is true for $x =1$, as we can choose simply $\overline{C}$ large enough. Suppose it is true for $x$.
Using \eqref{eq:recur}, we have that 
\begin{equation}\label{eq:recur1}
a_{x+1}\le \overline{C}\gamma_x^{\frac 1{1-\rho}} x^{\frac{2}{1-\rho}} +  C \overline{C}^{\frac{1+\rho}2}x^{\frac{1+\rho}{1-\rho}}.
\end{equation}
Hence
\begin{equation}\label{eq:recur2}
\frac{a_{x+1}}{\overline{C} (x+1)^{\frac{2}{1-\rho}}} \le \gamma_x^{\frac 1{1-\rho}} \left(\frac x{x+1}\right)^{\frac{2}{1-\rho}} + C \overline{C}^{ \frac{\rho-1}2} \left(\frac x{x+1}\right)^{\frac{1+\rho}{1-\rho}}\frac{1}{x+1}.
\end{equation}
Set $\widetilde{C} = C (\overline{C})^{ (\rho-1)/2}$. 
{Notice that as $\rho \in (0, 1/2]$, the larger $\overline{C}$ is, the smaller $\widetilde{C}$ becomes,} approaching zero in the limit. 
Using $\alpha, \rho \le 1$,
the right-hand side of \eqref{eq:recur2} can be bounded as follows:
\begin{equation}\label{eq:recur2.1}
\begin{aligned}
&\le \left(\frac{x+1}{x}\right)^{\frac{1}{1-\rho}} \left(\frac x{x+1}\right)^{\frac{2}{1-\rho}} + \widetilde{C}  \left(\frac x{x+1}\right)^{\frac{1+\rho}{1-\rho}} \frac{1}{x+1}\\
&=  \left(\frac{x}{x+1}\right)^{\frac{1}{1-\rho}}\left\{1  + \widetilde{C} \left(\frac{x}{x+1}\right)^{\frac{\rho}{1-\rho}} \frac1{x+1}\right\}\\
&\le \left(\frac{x}{x+1}\right)^{\frac{1}{1-\rho}}  \left(1 + \frac{\widetilde{C}}{x+1}\right).
\end{aligned}
\end{equation}
We can choose  $\widetilde{C}$ smaller than  1  (i.e. $\overline{C}$ large enough). Hence
$$
\begin{aligned}
\mbox{the right-hand side of \eqref{eq:recur2.1}} &\le  \left(\frac{x}{x+1}\right)^{\frac{1}{1-\rho}}  \left(1 + \frac{1}{x+1}\right) \\
&\le  \left(\frac{x+1}{x+2}\right)^{\frac{1}{1-\rho}}  \left(\frac{x+2}{x+1}\right) = \left(\frac{x+1}{x+2}\right)^{\frac{\rho}{1-\rho}}\le 1.
\end{aligned}
$$

\hfill
\end{proof}

\subsection{Proof of Theorem~\ref{thm:ACT18-main} part 3)}
We prove a more general result, and the proof is closely related to the one given by Davis  \cite{Davis89}.
\begin{proposition}\label{moreg1} Suppose that $\mathbf{X}$ has FTR and $\boldsymbol{\delta}$ is DT. {Assume that $F_0^{\ssup 2}<+\infty$ and $F_0^{\ssup 1}=+\infty$.} If $ \sum_{k=0}^{\infty} \delta_{2k}^{-2}<+\infty$ and $\sum_{k=0}^{\infty} \delta_{2k}^{-1}=+\infty$, then $\mathbf{X}$ is transient a.s..
\end{proposition}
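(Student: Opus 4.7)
The approach is the same as in part 2): show that the martingale $\M$ is bounded in $L^2$, deduce that it is uniformly integrable and that $\P(M_\infty \neq 0) > 0$, and derive a contradiction with the assumption of recurrence (since under recurrence $\tau < +\infty$ a.s., so $M_\infty = 0$ a.s.). What makes the present case simpler is that the stronger hypothesis $\sum_k \delta_{2k}^{-2} < +\infty$ yields a deterministic pathwise bound on the quadratic variation $S^2(\M)$, so the P\'olya-urn analysis used in part 2) is not needed.

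First, under FTR and DT we have
\begin{equation*}
\Phi_x \;=\; \frac{1}{f(0,x)} \sum_{\ell=0}^{\infty} \frac{1}{\delta_\ell} \;=\; \frac{2}{f(0,x)} \sum_{k=0}^{\infty} \frac{1}{\delta_{2k}} \;=\; +\infty \qquad \text{for every } x \in \Z_+,
\end{equation*}
because of the hypothesis $\sum_k \delta_{2k}^{-1} = +\infty$. Hence Sellke's 0-1 law (Theorem~\ref{Sellke}) reduces the proof to ruling out recurrence. Reasoning by contradiction, suppose $\X$ is recurrent almost surely; then $\tau < +\infty$ a.s., so $M_{n \wedge \tau} = 0$ for all $n \geq \tau$, and in particular $M_\infty = 0$ almost surely.

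The core estimate is the deterministic pathwise bound
\begin{equation*}
S^2(\M) \;=\; \sum_{x=0}^{\infty} \sum_{\ell=0}^{\phi_\tau(x)-1} \frac{1}{f(\ell, x)^2} \;\leq\; \sum_{x=0}^{\infty} \frac{1}{f(0,x)^2} \sum_{\ell=0}^{\infty} \frac{1}{\delta_\ell^2} \;=\; 2\, F_0^{\ssup 2} \sum_{k=0}^{\infty} \frac{1}{\delta_{2k}^2} \;<\; +\infty,
\end{equation*}
where FTR extracts $f(0,x)^{-2}$, DT reduces the sum over $\ell$ to twice the sum over $k$, and both hypotheses $F_0^{\ssup 2} < +\infty$ and $\sum_k \delta_{2k}^{-2} < +\infty$ are used for the finiteness. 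By orthogonality of the martingale increments this yields $\sup_n \E[M_n^2] \leq \E[S^2(\M)] < +\infty$, so $\M$ is bounded in $L^2$, hence uniformly integrable, so $\E[M_\infty]$ equals the (strictly positive) constant value of $\E[M_n]$ and in particular $\P(M_\infty \neq 0) > 0$. This contradicts $M_\infty = 0$ a.s., and Sellke's 0-1 law then forces $\X$ to be transient a.s. There is no real obstacle here: the key inequality is a one-liner, and the only point to handle with some care is that the martingale (not merely supermartingale) property of $\M$ under DT is precisely what converts the pathwise almost-sure bound on $S^2(\M)$ into an $L^2$ bound on $\M$ via increment orthogonality.
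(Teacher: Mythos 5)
Your proposal is correct and follows essentially the same route as the paper: a deterministic pathwise bound $S^2(\M)\le \const F_0^{\ssup 2}\sum_k\delta_{2k}^{-2}<\infty$, orthogonality of increments to get $L^2$-boundedness, uniform integrability giving $\E[M_\infty]=\E[M_1]>0$, and Sellke's 0--1 law to upgrade $\P(\tau=\infty)>0$ to almost sure transience. Your explicit check that $\Phi_x=+\infty$ (so that Sellke's law applies) is a small clarification the paper leaves implicit, but not a different argument.
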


\begin{proof} {For each $x \in \mathbb{Z}_+$,}  %This proof is essentially the same as in \cite{Davis89}. 
\begin{align*}
Z_x&:=\sum_{n=0}^{\infty} (M_{n+1}-M_n)^2 \cdot \1_{\{X_n,X_{n+1}\} = \{ x,x+1\}} \le  \sum_{\ell=0}^{\infty} \dfrac{1}{f(\ell,x)^2} \\
&= \sum_{k=1}^{\infty} \left\{ \dfrac{1}{f(2(k-1),x)^2} +  \dfrac{1}{f(2k-1,x)^2} \right\} \\
&\le \dfrac{1}{f(0,x)^2} + 2\sum_{k=1}^{\infty} \dfrac{1}{\{\delta_{2k} \cdot f(0,x)\}^2}= \left( 1 +2\sum_{k=1}^{\infty} \dfrac{1}{(\delta_{2k})^2} \right) \cdot \dfrac{1}{f(0,x)^2}.
\end{align*}
Thus we have
\begin{align*}
S^2(M) = \sum_{x=0}^{\infty} Z_x &\le \left( 1 +2\sum_{k=1}^{\infty} \dfrac{1}{(\delta_{2k})^2} \right) \cdot \left( \sum_{x=0}^{\infty} \dfrac{1}{f(0,x)^2} \right)=:\lambda <+\infty
\end{align*}
with probability one. {By the orthogonality of martingale increments,} we have
\[ \E[(M_N-M_1)^2] = \sum_{n=1}^{N-1}\E\left[ (M_{n+1}-M_n)^2\right] \leq \lambda, \]
for any $N$, which shows that $\{ M_n \}$ is an $L^2$-bounded martingale. We have
\[  \E\left[\lim_{n \to \infty} M_n \right] = \lim_{n \to \infty} \E[M_n] = M_1 =\dfrac{1}{f(0,0)} > 0,  \]
which implies
\[ \P(\tau<+\infty) \leq \P\left(  \lim_{n \to \infty} M_n=0 \right)<1. \]
This together with Sellke's 0-1 law (see {Theorem~\ref{Sellke}}) shows that $\mathbf{X}$ is transient.~\hfill\end{proof}

\subsection{Proof of Theorem~\ref{thm:ACT18-main} part 4)}
We provide a proof for a  more general result, which includes initially transient cases.
\begin{proposition}\label{moreg2} Suppose that $\mathbf{X}$ has FTR, and there exists a constant $C \in (0, \infty)$ such that
\begin{equation}
\dfrac{f(0,x)}{f(0,x-1)} \leq C \quad \mbox{for all $x\in \N$.} 
\end{equation}
If $ \sum_{\ell=0}^{\infty}{\delta_{\ell}^{-1}}<+\infty$, then 
 $\mathbf{X}$  localizes on a single edge a.s..
\end{proposition}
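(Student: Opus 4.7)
The approach is to use Rubin's embedding, adapting the argument of Davis~\cite{Davis90} to the spatially inhomogeneous setting. For each $x\in\Z_+$, let $(\xi_\ell^x)_{\ell\ge 0}$ be independent $\mathrm{Exp}(1)$ variables, also independent across $x$, and set
\[ K^x:=\sum_{\ell\ge 0}\frac{\xi_\ell^x}{\delta_\ell},\qquad R^x:=\frac{K^x}{f(0,x)}. \]
Under $\sum_\ell 1/\delta_\ell<+\infty$, $R^x<+\infty$ almost surely. In Rubin's embedding, edge $\{x,x+1\}$ accumulates local clock time only while the walk occupies $x$ or $x+1$, and its $k$-th firing --- triggering a traversal --- occurs when this clock reaches $T_k^x:=\sum_{\ell<k}\xi_\ell^x/(\delta_\ell f(0,x))$; thus $R^x$ is the total lifetime of edge $\{x,x+1\}$.

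First I would establish $\sup_n X_n<+\infty$ a.s. The crucial observation is that, up to the walk's first visit to $N+1$, the local clock of edge $\{N,N+1\}$ equals the time $A_N$ the walk has spent at vertex $N$, and every visit to $N$ preceding that first visit must end with a firing of $\{N-1,N\}$ (the only other way to leave $N$). Consequently $A_N$ is a subsum of the inter-firing intervals of $\{N-1,N\}$, giving $A_N\le R^{N-1}$. Hence on the event
\[ E_N:=\{\xi_0^N>f(0,N)R^{N-1}\}, \]
edge $\{N,N+1\}$ can never fire, so $\sup_n X_n\le N$. Using $f(0,N)\le Cf(0,N-1)$ and the independence of $\xi_0^N$ from $K^{N-1}$,
\[ \P(E_N)\ge\P(\xi_0^N>CK^{N-1})=\E\bigl[e^{-CK^{N-1}}\bigr]=\prod_{\ell\ge 0}(1+C/\delta_\ell)^{-1}=:p>0, \]
the positivity coming from $\sum_\ell 1/\delta_\ell<+\infty$. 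For $N=1,3,5,\dots$, the events $E_N$ depend on disjoint families of exponentials and are therefore mutually independent; the second Borel--Cantelli lemma yields $\P(E_N\text{ occurs for infinitely many odd }N)=1$, and in particular $\sup_n X_n<+\infty$ almost surely.

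To upgrade bounded range to localization on a single edge, the walk's eventual range $R=\{v\in\Z_+:X_n=v\text{ i.o.}\}$ is automatically an interval $\{N^*,\dots,M\}$. Inside $R$, every edge has its Rubin clock exhausted (equivalently, $A_{v-1}+A_v=R^{v-1}$ for $v\in\{N^*+1,\dots,M\}$), while the edges adjacent to but outside $R$ have underused clocks. Combining these constraints with the sign constraints $A_v\ge 0$ and the almost-sure distinctness of the absolutely continuous random variables $R^x$ rules out $|R|\ge 3$, so $R=\{M-1,M\}$ a.s.

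The main obstacle I expect is this last step: while the Borel--Cantelli argument delivers bounded range cleanly, the refinement to a single edge requires careful tracking of the local times $A_v$ in the Rubin embedding and a uniqueness argument for the exhausted edge inside the range. This parallels the corresponding analysis in~\cite{Davis90}, extended to our setting by invoking $f(0,x)/f(0,x-1)\le C$ for cross-edge comparisons.
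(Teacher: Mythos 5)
Your proposal is correct in substance but takes a genuinely different route from the paper for the key estimate. The paper stays entirely discrete: it lower-bounds $\P(E_x)$, with $E_x$ the event that the walk never steps from $x$ to $x+1$, by the infinite product over visits $k$ of the conditional probability of stepping left, namely $\prod_{k\ge 1}\bigl(1+\tfrac{f(0,x)}{\delta_{2k-1}f(0,x-1)}\bigr)^{-1}\ge \exp\bigl(-C\sum_{k}\delta_{2k-1}^{-1}\bigr)>0$, then invokes the second Borel--Cantelli lemma and cites Rubin's theorem (Corollary 3.6 of Takeshima) to pass from finite range to a single edge. You instead pass to the continuous-time Rubin embedding and use the smaller event $\{\xi_0^N>f(0,N)R^{N-1}\}$ --- the first clock of edge $\{N,N+1\}$ outlasting the entire lifetime of edge $\{N-1,N\}$ --- whose probability you compute exactly as $\prod_\ell(1+C/\delta_\ell)^{-1}>0$ via the Laplace transform. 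Your route is slightly heavier machinery for the finite-range step, but it buys you completely transparent independence along odd $N$ (the paper's events $E_x$ are in fact nested, $E_1\subseteq E_2\subseteq\cdots$, so its appeal to the second Borel--Cantelli lemma is only legitimate through the implicit independence of the per-vertex urns, which your construction makes explicit), and it sets up exactly the apparatus needed for the final step. The chain $A_N\le R^{N-1}$, the Laplace-transform identity, and the comparison $f(0,N)R^{N-1}\le CK^{N-1}$ are all correct.

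The one soft spot is the upgrade from finite range to a single edge: your closing heuristic (``exhausted clocks plus almost-sure distinctness of the absolutely continuous $R^x$ rules out $|R|\ge 3$'') is not by itself a proof, since the identities $A_{v-1}+A_v=R^{v-1}$ give fewer equations than unknowns and the genuine argument is the probabilistic one of Rubin/Davis, not a dimension count. You flag this honestly and defer to Davis; the paper does exactly the same by citing Takeshima, and the hypothesis $\Phi_x=f(0,x)^{-1}\sum_\ell\delta_\ell^{-1}<+\infty$ needed there is satisfied, so this does not constitute a gap relative to the paper's own standard of proof.
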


\begin{proof} 
For each $x\in \N$, we define 
$$E_x:= \left\{ \sum_{n=0}^\infty \1_{(X_n, X_{n+1}) = (x, x+1)} = 0\right\},$$
that is the event that the process never jumps from $x$ to $x+1$. The $k$-th time the process visits vertex $x$, {the conditional probability} that it jumps to $x-1$ is 
$$  \dfrac{{\delta_{2k - 1}} \cdot f(0,x-1)}{{\delta_{2k - 1}} \cdot f(0,x-1)+f(0,x)}.$$
Then, we have 
\begin{align*}
\P(E_x) 
&{\ge}\prod_{k=1}^{\infty} \dfrac{{\delta_{2k - 1}} \cdot f(0,x-1)}{{\delta_{2k - 1}} \cdot f(0,x-1)+f(0,x)} \\
&= {\prod_{k=1}^{\infty}} \left(1+ \dfrac{f(0,x)}{{\delta_{2k - 1}} \cdot f(0,x-1)} \right)^{-1} \\
&\geq \exp\left( -\dfrac{f(0,x)}{f(0,x-1)} {\sum_{k=1}^{\infty} \dfrac{1}{\delta_{2k-1}}} \right) 
\geq \exp\left( -C {\sum_{k=1}^{\infty} \dfrac{1}{\delta_{2k-1}}} \right)>0.
\end{align*}
This shows that $\sum_{x} \P(E_x) =+\infty$. The second Borel-Cantelli lemma implies that $\P(\mbox{$E_x$ occurs for infinitely many $x$'s})=1$ and {$\P(\mbox{$\mathbf{X}$ is of finite range})=1$.} In fact we have $\P(\mbox{$\mathbf{X}$ is localized to a single edge})=1$ by an application of Rubin's theorem (see Corollary 3.6 in \cite{Takeshima00}). \hfill 
%Something is wrong:
%Let $x_0$ be the smallest positive integer such that all but finitely many draws from $\boldsymbol{U}_{x_0}$ are \MARU{L}. Then all but finitely many draws from $\boldsymbol{U}_{x_0}$ must be \MARU{R}, and we have $P(\mbox{$\mathbf{X}$ is localized to a single edge})=1$.
\end{proof}

\section{Appendix}\label{appendix}
\begin{proof}[{\bf Proof of Theorem \ref{thm:Vervoort00Z+}}] %Although this result follows from Theorem 8.2.2 in \cite{Vervoort00}, we give a short direct proof for our case. 
%For $x >0$, let
%\[ N_x := \sum_{n=1}^{\infty} \1_{\{ (X_n,X_{n+1})=(x,x-1) \}}. \]
Let $E:=\left\{ \mbox{$\tau=+\infty$,\, $\displaystyle \lim_{n \to \infty} X_n=+\infty$} \right\}$. {By Theorem \ref{Sellke}, $\P(E)=0$ implies that $\mathbf{X}$ is recurrent a.s..} 
Recall the definition of $\phi$ from \eqref{def:phi}. On the event $E$, 
\[ {M_{\infty} \geq \sum_{x=0}^{\infty} \dfrac{1}{w_{\infty}(x)} = \sum_{x=0}^{\infty} \dfrac{1}{\delta_{\phi_{\infty}(x)} \cdot f(0,x)} .} \]
Suppose that $\delta_k \leq K$ for all $k \in \Z_+$. Then we have
\[ \sum_{x=0}^{\infty} \dfrac{1}{\delta_{\phi_{\infty}(x)} \cdot f(0,x)} \geq \dfrac{1}{K}\sum_{x=0}^{\infty} \dfrac{1}{f(0,x)}=+\infty. \]
{By Doob\rq{}s convergence theorem,} $\P(E)$ cannot be positive.

Next we assume ii). Define 
\[ \Theta_n := M_n + \sum_{m=1}^{n} \left\{ \dfrac{1}{w_m (X_m )} - \dfrac{1}{w_{m+1} (X_m )} \right\} \cdot \1_{X_{m \wedge \tau} < X_{(m+1) \wedge \tau}}. \]
The process $(\Theta_n\colon n \in \Z_+)$  is a nonnegative martingale. (see Lemma 3.0 in \cite{Davis90} for details). 
We rewrite
\[ \Theta_n = \sum_{x=0}^{X_{n \wedge \tau}-1}\dfrac{1}{f(0,x)} \sum_{\ell=0}^{\phi_n(x)-1} \dfrac{(-1)^{\ell}}{\delta_{\ell}}. \]
Fix $x \in \mathbb{Z}_+$ and suppose that $\phi_n(x)=2k+1$ for some $k \in \mathbb{Z}_+$, we have
\begin{align*}
\sum_{\ell=0}^{\phi_n(x)-1} \dfrac{(-1)^{\ell}}{\delta_{\ell}} &= \sum_{m=1}^{k-1} \left(\dfrac{1}{\delta_{2m}} - \dfrac{1}{\delta_{2m+1}}\right) + \dfrac{1}{\delta_{2k}}.
\end{align*}
Assume that $\delta_{2k_0} < \delta_{2k_0+1}$, and let $C := \dfrac{1}{\delta_{2k_0}} - \dfrac{1}{\delta_{2k_0+1}}>0$. 
Then we have
\begin{align*}
\sum_{m=1}^{k-1} \left(\dfrac{1}{\delta_{2m}} - \dfrac{1}{\delta_{2m+1}}\right) + \dfrac{1}{\delta_{2k}} \geq
\begin{cases}
\dfrac{1}{\delta_{2k_0}} - \dfrac{1}{\delta_{2k_0+1}} = C &\mbox{if $k_0<k$}, \vspace{2mm}\\
\dfrac{1}{\delta_{2k}} \geq \dfrac{1}{\delta_{2k_0}} \geq %\dfrac{1}{\delta_{2k_0}} - \dfrac{1}{\delta_{2k_0+1}}=
C &\mbox{if $k \leq k_0$},
\end{cases}
\end{align*}
and
\[ \Theta_n \geq C \sum_{x=0}^{X_{n \wedge \tau}-1}\dfrac{1}{f(0,x)}. \]
This shows that $\P(E)$ cannot be positive.
\hfill
\end{proof}

\begin{ack}
A.C. is grateful to Yokohama National University for its hospitality, and he was supported by ARC grant  DP180100613 and Australian Research Council Centre of Excellence for Mathematical and Statistical Frontiers (ACEMS) CE140100049.
M.T. is partially supported by JSPS Grant-in-Aid for Young Scientists (B) No. 16K21039.
The authors thank Ben Amiet for reading the manuscript and helping to produce Figure 1. {They also thank two anonymous referees for detailed comments. Finally they thank Amanoya for offering a very nice environment, where  part of this research was carried.}
\end{ack}

\end{document}